\DeclareSymbolFont{rmlargesymbols}{U}{euex}{m}{n}
\DeclareMathSymbol{\rmintop}{\mathop}{rmlargesymbols}{82}
\def\R{\mathbb{R}}
\def\cD{\mathcal{D}}
\def\al{\alpha}
\def\ga{\gamma}
\def\om{\omega}
\def\Om{\Omega}
\newcommand{\der}{{\rm d}}
\numberwithin{equation}{section}
\newtheorem{theorem}{Theorem}[section]
\theoremstyle{remark}
\newcommand{\qr}{
\begin{center}
\includegraphics[scale=0.5]{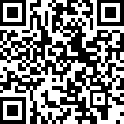}\\

\text{Scan the QR code to view more articles from the author}
\end{center}}
\author{Matthew Randall}
\address{Institute of Mathematical Sciences\\
ShanghaiTech University\\
393 Middle Huaxia Road\\
Shanghai, 201210\\
China}
\email{mjrandall@shanghaitech.edu.cn}
\subjclass[2010]{53A30, 58A15, 34A05, 34A34 (primary)} 
\title{Nurowski's conformal class of a maximally symmetric $(2,3,5)$-distribution and its Ricci-flat representatives}
\begin{document}

\begin{abstract}
We show that the solutions to the second-order differential equation associated to the generalised Chazy equation with parameters $k=2$ and $k=3$ naturally show up in the conformal rescaling that takes a representative metric in Nurowski's conformal class associated to a maximally symmetric $(2,3,5)$-distribution (described locally by a certain function $\varphi(x,q)=\frac{q^2}{H''(x)}$) to a Ricci-flat one. 
\end{abstract}

\maketitle

\pagestyle{myheadings}
\markboth{Randall}{Nurowski's conformal class of a maximally symmetric distribution and its Ricci-flat representatives}

The article concerns the occurrence of the $k=2$ and $k=3$ generalised Chazy equation in a geometric setting, closely connected to the occurrence of the solutions of the generalised Chazy equation with parameters $k=\frac{2}{3}$ and $k=\frac{3}{2}$ respectively. We first discuss the set-up in which the differential equations will appear. 
This concerns the theory of maximally non-integrable rank 2 distribution $\cD$ on a 5-manifold $M$. The maximally non-integrable condition of $\cD$ determines a filtration of the tangent bundle $TM$ given by
\[
\cD \subset [\cD,\cD] \subset [\cD,[\cD,\cD]]\cong TM.
\]
The distribution $[\cD, \cD]$ has rank 3 while the full tangent space $TM$ has rank 5, hence such a geometry is also known as a $(2,3,5)$-distribution. Let $M_{xyzpq}$ denote the 5-dimensional mixed order jet space $J^{2,0}(\R,\R^2) \cong J^2(\R,\R)\times \R$ with local coordinates given by $(x,y,z,p,q)=(x,y,z,y',y'')$ (see also \cite{tw13}, \cite{tw14}). Let $\cD_{\varphi(x,y,z,y',y'')}$ denote the maximally non-integrable rank 2 distribution on $M_{xyzpq}$ associated to the underdetermined differential equation $z'=\varphi(x,y,z,y',y'')$. This means that the distribution is annihilated by the following three 1-forms
\begin{align*}
\om_1=\der y-p \der x, \qquad \om_2=\der p-q \der x, \qquad  \om_3=\der z-\varphi(x,y,z,p,q) \der x.
\end{align*}
In \cite{conf}, it is shown how to associate canonically to such a (2,3,5)-distribution a conformal class of metrics of split signature $(2,3)$ (henceforth known as Nurowski's conformal structure or Nurowski's conformal metrics) such that the rank 2 distribution is isotropic with respect to any metric in the conformal class. The method of equivalence \cite{annur}, \cite{cartan1910}, \cite{conf}, \cite{Strazzullo}, \cite{new} gives the coframing for Nurowski's metric. The 1-forms in the coframe satisfy the structure equations
\begin{align}\label{cse}
\der \theta_1&=\theta_1\wedge (2\Om_1+\Om_4)+\theta_2\wedge \Om_2+\theta_3 \wedge \theta_4,\nonumber\\
\der \theta_2&=\theta_1\wedge\Om_3+\theta_2\wedge (\Om_1+2\Om_4)+\theta_3 \wedge \theta_5,\nonumber\\
\der \theta_3&=\theta_1\wedge\Om_5+\theta_2\wedge\Om_6+\theta_3\wedge (\Om_1+\Om_4)+\theta_4 \wedge \theta_5,\\
\der \theta_4&=\theta_1\wedge\Om_7+\frac{4}{3}\theta_3\wedge\Om_6+\theta_4\wedge \Om_1+\theta_5 \wedge \Om_2,\nonumber\\
\der \theta_5&=\theta_2\wedge \Om_7-\frac{4}{3}\theta_3\wedge \Om_5+\theta_4\wedge\Om_3+\theta_5\wedge \Om_4.\nonumber
\end{align}

A representative metric in Nurowski's conformal class \cite{conf} is given by
\begin{align}\label{metric}
g=2 \theta_1 \theta_5-2\theta_2 \theta_4+\frac{4}{3}\theta_3 \theta_3.
\end{align}
When $g$ has vanishing Weyl tensor, the distribution is called maximally symmetric and has split $G_2$ as its group of local symmetries.

The historic example is the case where $\varphi$ is given by $\varphi(x,y,z,p,q)=q^2$. When $\varphi(x,y,z,p,q)=q^2$, we obtain the Hilbert-Cartan distribution associated to the Hilbert-Cartan equation $z'=(y'')^2$.
When $\varphi(x,y,z,p,q)=q^m$, we obtain the distribution associated to the equation $z'=(y'')^m$. For such distributions, Nurowski's metric \cite{conf} given by (\ref{metric}) has vanishing Weyl tensor precisely when $m \in \{-1,\frac{1}{3},\frac{2}{3},2\}$. In these cases the maximally symmetric distributions are all locally diffeomorphic to the Hilbert-Cartan or flat model obtained when $m=2$.

In this article, we consider distributions here of the form $\varphi(x,y,z,p,q)=\frac{q^2}{H''(x)}$. The Weyl tensor vanishes in the case where $H(x)$ satisfies the 6th-order ordinary differential equation (ODE) known as Noth's equation \cite{annur}. For such maximally symmetric distributions we find the corresponding Ricci-flat representatives in Nurowki's conformal class. This involves solving a second-order differential equation (see Proposition 35 of \cite{tw13}) to find the conformal scale in which the Ricci tensor of the conformally rescaled metric vanishes, which turns out to be related to the solutions of Noth's equation. The 6th-order ODE can be reduced to the generalised Chazy equation with parameter $k=\frac{3}{2}$ and its Legendre dual is another 6th-order ODE that can be reduced to the generalised Chazy equation with parameter $k=\frac{2}{3}$. We find the second-order differential equation that determines the conformal scale for Ricci-flatness involves solutions of the generalised Chazy equation with parameter $k=3$ and in the dual case $k=2$. This is the content of Theorems \ref{A} and \ref{B}.
We also give few remarks concerning the case for other parameters of $k$.

The aim of finding Ricci-flat representatives is motivated by the consideration that in the Ricci-flat, conformally flat case, we might be able to integrate the structure equations and reexpress them in Monge normal form to obtain the Hilbert-Cartan distribution. This is possible for the distributions of the form $\varphi(x,y,z,p,q)=q^m$, with $m \in \{-1,\frac{1}{3},\frac{2}{3},2\}$, but would require further investigations in the general setting. 

The computations here are done using the indispensable \texttt{DifferentialGeometry} package in Maple 2018. 

\section{Deriving the equation for Ricci-flatness}
We shall consider the rank 2 distribution $\cD_{\varphi(x,q)}$ on $M_{xyzpq}$ associated to the underdetermined differential equation $z'=\varphi(x,y'')$ where $\varphi(x,y'')=\frac{(y'')^2}{H''(x)}$ and $H''(x)$ is a non-zero function of $x$. This is to say that the distribution $\cD_{\varphi(x,q)}$ is annihilated by the three 1-forms
\begin{align*}
\om_1=&\der y-p \der x,\nonumber\\
\om_2=&\der p-q \der x,\\
\om_3=&\der z-\varphi(x,q) \der x,\nonumber
\end{align*}
where $\varphi(x,q)=\frac{q^2}{H''(x)}$. These three 1-forms are completed to a coframing on $M_{xyzpq}$ by the additional 1-forms
\begin{align*}
\om_4=\der q-\frac{H^{(3)}}{H''}q\der x, \hspace{12pt} \om_5=-\frac{H''}{2}\der x.
\end{align*}
Taking appropriate linear combinations, we let 
\begin{align*}
\theta_1=&\om_3-\frac{2}{H''}q\om_2,\qquad \theta_2=\om_1, \qquad \theta_3=\left(\frac{2}{H''}\right)^{\frac{1}{3}}\om_2,
\end{align*}
with 
\[
\theta_4=\left(\frac{2}{H''}\right)^{\frac{2}{3}}\om_4+a_{41}\theta_1+a_{42}\theta_2+a_{43}\theta_3
\]
and 
\[
\theta_5=\left(\frac{2}{H''}\right)^{\frac{2}{3}}\om_5+a_{51}\theta_1+a_{52}\theta_2+a_{53}\theta_3.
\]
Imposing Cartan's structure equations (\ref{cse}) on $(\theta_1, \theta_2, \theta_3, \theta_4, \theta_5)$ then gives the constraints $a_{51}=a_{53}=0$ and $a_{41}=a_{52}$, which we can set both to be zero, and we also find
$a_{42}=\frac{1}{30}\frac{2^{\frac{2}{3}}(3H''H^{(4)}-5(H^{(3)})^2)}{(H'')^{\frac{8}{3}}}$
and
$a_{43}=-\frac{2^{\frac{1}{3}}H^{(3)}}{3(H'')^{\frac{4}{3}}}$. The metric $g=2\theta_1\theta_5-2\theta_2\theta_4+\frac{4}{3}\theta_3\theta_3$ is conformally flat, i.e. the metric $g$ has vanishing Weyl tensor if and only if $H(x)$ is a solution to the 6th-order nonlinear differential equation 
\begin{align}\label{noth}
10(H'')^3H^{(6)}-70(H'')^2H^{(3)}H^{(5)}-49(H'')^2(H^{(4)})^2+280H''(H^{(3)})^2H^{(4)}-175(H^{(3)})^4=0.
\end{align}
This equation is called Noth's equation \cite{annur}. 
In this case the distribution of the form $\cD_{\varphi(x,q)}$ is maximally symmetric and in the paper we will concern ourselves with the problem of finding Ricci-flat representatives in the conformal class of metrics associated to this distribution. 

The explicit form of the metric given by the distribution $\cD_{\varphi(x,q)}$ is as follows. If we replace $H''(x)=e^{\int \frac{2}{3}P(x)\der x}$, we find that equation (\ref{noth}) reduces to the $k=\frac{3}{2}$ generalised Chazy equation
\[
P'''-2PP''+3P'^2-\frac{4}{36-(\frac{3}{2})^2}(6P'-P^2)^2=0,
\]
and we find that the conformally rescaled metric
$\tilde g=2^{-\frac{2}{3}}(H'')^{\frac{2}{3}}g$ has the form
\begin{align*}
\tilde g=-\frac{2}{15}(P'-\frac{4}{9}P^2)\om_1\om_1+\frac{4}{9}P\om_1\om_2+\frac{4}{3}\om_2\om_2+2\om_3\om_5-2\om_1\om_4-4qe^{\int -\frac{2}{3}P \der x}\om_2\om_5.
\end{align*}
We can reexpress this metric as
\begin{align*}
\tilde g&=-\frac{2}{15}(P'-\frac{1}{6}P^2)\om_1\om_1+\frac{4}{3}\left(\frac{P}{6}\om_1+\om_2\right)\left(\frac{P}{6}\om_1+\om_2\right)+2\om_3\om_5-2\om_1\om_4-4qe^{\int -\frac{2}{3}P \der x}\om_2\om_5.
\end{align*}
By defining the new coframes
\begin{align*}
\tilde \om_3&=e^{\int \frac{2P}{3}\der x}\om_3,\\
\tilde \om_5&=e^{-\int \frac{2P}{3}\der x}\om_5,
\end{align*} and making the further substitution $Q=P^2-6P'$,
we get the following cosmetic improvement for $\tilde g$: 
\begin{align*}
\tilde g
&=\frac{1}{45}Q\om_1\om_1+\frac{4}{3}\left(\frac{P}{6}\om_1+\om_2\right)\left(\frac{P}{6}\om_1+\om_2\right)+2\tilde \om_3\tilde \om_5-2\om_1\om_4-4q \om_2\tilde \om_5.
\end{align*}
From this we can rescale the metric $\tilde g$ further by a conformal factor $\Om$ to obtain a Ricci-flat representative. When ${\rm Ric}(\Om^2\tilde g)=0$, we say that $\Om^2\tilde g$ is a Ricci-flat representative of Nurowski's conformal class. 
We find that $\Om^2 \tilde g$ is Ricci-flat when $\Om$ satisfies the second-order differential equation
\begin{equation*}
\Om''\Om-2(\Om')^2-\frac{2}{3}P\Om\Om'-\frac{1}{18}P^2\Om^2-\frac{1}{30}Q\Om^2=0.
\end{equation*}
We make the substitution $\Om=\frac{1}{\rho} e^{-\frac{1}{3}\int P\der x}$ to obtain
\begin{equation}\label{sode1}
\rho''-\frac{1}{45}Q\rho=0,
\end{equation}
where $\rho(x)$ is to be determined.

The function $H(x)$ is related to another function $F(\tilde x)$ by a Legendre transformation \cite{annur}, \cite{r16}. We say that $F(\tilde x)$ is the Legendre dual of $H(x)$ determined by the relation $H(x)+F(\tilde x)=x\tilde x$. This implies $\tilde x=H'(x)$ with $\der \tilde x=H''\der x$ and $H''=\frac{1}{F_{\tilde x\tilde x}}$. We can make use of this transformation to write $\der x=F_{\tilde x\tilde x}\der \tilde x$. The Legendre dual of the distribution $\cD_{\varphi(x,q)}$ is therefore given by the annihilator of the three 1-forms
\begin{align*}
\om_1=&\der y-p F_{\tilde x \tilde x}\der \tilde x,\nonumber\\
\om_2=&\der p-q F_{\tilde x \tilde x}\der \tilde x,\\
\om_3=&\der z-q^2 (F_{\tilde x \tilde x})^2\der \tilde x\nonumber
\end{align*}
on the mixed jet space with local coordinates $(\tilde x, y, z, p, q)$. Relabelling $\tilde x$ with $x$, we have
\begin{align*}
\om_1=&\der y-p F''\der x,\nonumber\\
\om_2=&\der p-q F''\der x,\\
\om_3=&\der z-q^2 (F'')^2\der x.\nonumber
\end{align*}
Here $F$ now becomes a function of $x$. These three 1-forms are completed to a coframing on $M$ with local coordinates $(x,y,z,p,q)$ by the additional 1-forms
\begin{align*}
\om_4=\der q+\frac{F'''}{F''}q\der x, \hspace{12pt} \om_5=-\frac{1}{2}\der x.
\end{align*}
(These are the Legendre transformed 1-forms $\om_4$ and $\om_5$). Similar as before, we consider the linear combinations
\begin{align*}
\theta_1=\om_3-2F'' q\om_2,\qquad \theta_2=\om_1, \qquad \theta_3=(2 F'')^{\frac{1}{3}}\om_2,
\end{align*}
with 
\[
\theta_4=(2 F'')^{\frac{2}{3}}\om_4+b_{41}\theta_1+b_{42}\theta_2+b_{43}\theta_3
\]
and 
\[
\theta_5=(2 F'')^{\frac{2}{3}}\om_5+b_{51}\theta_1+b_{52}\theta_2+b_{53}\theta_3.
\]
Imposing Cartan's structure equations (\ref{cse}) on $(\theta_1, \theta_2, \theta_3, \theta_4, \theta_5)$ again gives $b_{51}=b_{53}=0$ and $b_{41}=b_{52}$, which we set to be zero. We also obtain $b_{42}=-\frac{1}{30}\frac{2^{\frac{2}{3}}(3F''F^{(4)}-4(F^{(3)})^2)}{(F'')^{\frac{10}{3}}}$ and
$b_{43}=\frac{2^{\frac{1}{3}}F^{(3)}}{3(F'')^{\frac{5}{3}}}$. A representative metric of Nurowski's conformal class is again given by (\ref{metric}).
The condition that the metric $g$ is conformally flat, i.e.\ the metric $g$ has vanishing Weyl tensor, occurs when $F(x)$ is a solution to the nonlinear differential equation 
\begin{align}\label{6thode}
10(F'')^3F^{(6)}-80(F'')^2F^{(3)}F^{(5)}-51(F'')^2(F^{(4)})^2+336F''(F^{(3)})^2F^{(4)}-224(F^{(3)})^4=0.
\end{align}
If we replace $F''(x)=e^{\int \frac{1}{2}P(x)\der x}$, we find that the conformally rescaled metric
$\tilde g=2^{\frac{1}{3}}(F'')^{-\frac{2}{3}}g$ has the form
\begin{align}\label{roughg}
\tilde g=\frac{1}{30}(6P'-P^2)e^{\int -P \der x}\om_1\om_1-\frac{2}{3}Pe^{-\int \frac{1}{2}P \der x}\om_1\om_2+\frac{8}{3}\om_2\om_2+4\om_3\om_5-4\om_1\om_4-8qe^{\int \frac{1}{2}P \der x}\om_2\om_5.
\end{align}
Here equation (\ref{6thode}) is reduced to the generalised Chazy equation 
\[
P'''-2PP''+3P'^2-\frac{4}{36-(\frac{2}{3})^2}(6P'-P^2)^2=0
\]
for $P(x)$ with parameter $k=\frac{2}{3}$.
From the form of the metric $\tilde g$ we can locally rescale the metric again by a conformal factor to obtain Ricci-flat representatives. 

We find that the Ricci tensor of $\Om^2 \tilde g$ is zero when $\Om$ satisfies
\begin{equation*}
40\Om''\Om-80(\Om')^2-6\Om^2P'+\Om^2P^2=0.
\end{equation*}
If we make the substitution $\Om=\frac{1}{\nu}$, then we obtain the differential equation
\begin{equation}\label{sode2}
\nu''-\frac{1}{40}Q\nu=0
\end{equation}
where $Q=P^2-6P'$ and $\nu$ is to be determined. From the form the metric $\tilde g$ in (\ref{roughg}), we can also define new coframes by
\begin{align*}
\tilde \om_1&=e^{-\int \frac{P}{2}\der x}\om_1=\frac{\der y}{F''}-p\der x,\nonumber\\
\tilde \om_2&=\om_2=\der p-q F'' \der x,\nonumber\\
\tilde \om_3&=e^{-\int \frac{P}{2}\der x}\om_3=\frac{\der z}{F''}-q^2 F''\der x,\\
\tilde \om_4&=e^{\int \frac{P}{2}\der x}\om_4=F'' \der q+q F''' \der x,\nonumber\\
\tilde \om_5&=e^{\int \frac{P}{2}\der x}\om_5=-\frac{F''}{2}\der x.\nonumber
\end{align*}
We have used that $e^{-\int \frac{P}{2}\der x}=\frac{1}{F''}$.
Also replacing $6P'-P^2=-Q$, this gives the cosmetic improvement for $\tilde g$: 
\begin{align*}
\tilde g=-\frac{Q}{30}\tilde\om_1\tilde\om_1-\frac{2P}{3}\tilde\om_1\tilde\om_2+\frac{8}{3}\tilde\om_2\tilde\om_2+4\tilde\om_3\tilde\om_5-4\tilde\om_1\tilde\om_4-8q\tilde\om_2\tilde\om_5.
\end{align*}
We now investigate the solutions to (\ref{sode1}) and (\ref{sode2}). They are given by Theorems \ref{A} and \ref{B}. We first review some results about the solutions to the generalised Chazy equation. 

\section{Generalised Chazy equation}

The generalised Chazy equation with parameter $k$ is given by
\begin{align*}
y'''-2yy''+3y'^2-\frac{4}{36-k^2}(6y'-y^2)^2=0
 \end{align*} 
and Chazy's equation 
\begin{align*}
y'''-2yy''+3y'^2=0
 \end{align*} 
is obtained in the limit as $k$ tends to infinity. The generalised Chazy equation was introduced in \cite{chazy1}, \cite{chazy2} and studied more recently in \cite{co96}, \cite{acht}, \cite{ach} and \cite{bc17}. The generalised Chazy equation with parameters $k=\frac{2}{3}$, $\frac{3}{2}$, $2$ and $3$ was also further investigated in \cite{r16b}. The solution to the generalised Chazy equation is given by the following (see also \cite{bc17} and \cite{r16b}). Let 
\begin{align*}\label{weq}
w_1&=-\frac{1}{2}\frac{\der }{\der x}\log\frac{s'}{s(s-1)},\nonumber \\
w_2&=-\frac{1}{2}\frac{\der }{\der x}\log\frac{s'}{s-1},\\
w_3&=-\frac{1}{2}\frac{\der }{\der x}\log\frac{s'}{s},\nonumber
\end{align*}
where $s=s(\al,\beta,\ga,x)$ is a solution to the Schwarzian differential equation 
\begin{equation}\label{sde}
\{s,x\}+\frac{1}{2}(s')^2V=0
\end{equation}
and
\[
\{s,x\}=\frac{\der}{\der x}\left(\frac{s''}{s'}\right)-\frac{1}{2}\left(\frac{s''}{s'}\right)^2
\]
is the Schwarzian derivative with the potential $V$ given by
\begin{equation}\label{pot}
V=\frac{1-\beta^2}{s^2}+\frac{1-\ga^2}{(s-1)^2}+\frac{\beta^2+\ga^2-\al^2-1}{s(s-1)}.
\end{equation}
The combination $y=-2w_1-2w_2-2w_3$ solves the generalised Chazy equation when 
\[
(\al,\beta,\ga)=\left(\frac{1}{3},\frac{1}{3},\frac{2}{k}\right) \text{~or~} \left(\frac{2}{k},\frac{2}{k},\frac{2}{k}\right).
\]

The combination $y=-w_1-2w_2-3w_3$ solves the generalised Chazy equation when 
\[
(\al,\beta,\ga)=\left(\frac{1}{k},\frac{1}{3},\frac{1}{2}\right) \text{~or~} \left(\frac{1}{k},\frac{2}{k},\frac{1}{2}\right) \text{~or~}  \left(\frac{1}{k},\frac{1}{3},\frac{3}{k}\right) ,
\]
with permutations of $w_1$, $w_2$ and $w_3$ in $y$ corresponding to permutations of the values $\al$, $\beta$ and $\ga$ in $(\al,\beta,\ga)$.
The combination $y=-w_1-w_2-4w_3$ solves the generalised Chazy equation whenever 
\[
(\al,\beta,\ga)=\left(\frac{1}{k},\frac{1}{k},\frac{4}{k}\right) \text{~or~} \left(\frac{1}{k},\frac{1}{k},\frac{2}{3}\right),
\]
again permuting $w_1$, $w_2$ and $w_3$ in $y$ corresponds to permuting the values $\al$, $\beta$, $\ga$ in $(\al,\beta,\ga)$.
Following \cite{acht}, the functions $w_1$, $w_2$ and $w_3$ satisfy the following system of differential equations:
\begin{align}\label{wde}
w_1'=w_2w_3-w_1(w_2+w_3)+\tau^2,\nonumber\\
w_2'=w_3w_1-w_2(w_3+w_1)+\tau^2,\\
w_3'=w_1w_2-w_3(w_1+w_2)+\tau^2,\nonumber
\end{align}
where
\[
\tau^2=\al^2(w_1-w_2)(w_3-w_1)+\beta^2(w_2-w_3)(w_1-w_2)+\ga^2(w_3-w_1)(w_2-w_3).
\]
The second-order differential equation associated to the generalised Chazy equation with parameter $k$ is given by
\begin{align}\label{uode}
u_{ss}+\frac{1}{4}Vu=0
\end{align}
with the same potential $V$ as given in (\ref{pot}).
This corresponds to the general solution of the Schwarzian differential equation (\ref{sde}) after exchanging dependent and independent variables \cite{co96}. In this case $x=\frac{u_2}{u_1}$ where $u_1$ and $u_2$ are linearly independent solutions to (\ref{uode}). Using the further substitution $u(s)=(s-1)^{\frac{1-\gamma}{2}}s^{\frac{1-\beta}{2}}z(s)$, the equation (\ref{uode}) can be brought to the hypergeometric differential equation
\begin{align*}
s(1-s)z_{ss}+(c-(a+b+1)s)z_s-a bz=0
\end{align*}
with 
\[
a=\frac{1}{2}(1-\al-\beta-\ga), \qquad b=\frac{1}{2}(1+\al-\beta-\ga), \qquad c=1-\beta.
\]
From the differential equations (\ref{wde}), we can recover $s$ by $s=\frac{w_1-w_3}{w_2-w_3}$. From this we deduce $s'=2(w_1-w_2)s$ and we also obtain the relation $\der s=2(w_1-w_2) s \der x$.

\section{Main results: Solving the equations for Ricci-flatness}
In this section we give the general solution to the differential equation (\ref{sode1})
where $Q=P^2-6P'$ and $P$ is a solution of the $k=\frac{3}{2}$ generalised Chazy equation in Theorem \ref{A} and
the general solution to the differential equation (\ref{sode2})
where again $Q=P^2-6P'$ and $P$ is a solution of the $k=\frac{2}{3}$ generalised Chazy equation in Theorem \ref{B}.
We first prove the following theorem
\begin{theorem}\label{A}
The solution to the differential equation
 \[
\rho''-\frac{1}{45}Q\rho=0,
\]
where $Q=P^2-6P'$ and $P$ is a solution to the $k=\frac{3}{2}$ generalised Chazy equation, is given by $\rho=\frac{u}{v}$ where
$v$ is the solution to the second-order differential equation associated to the $k=\frac{3}{2}$ generalised Chazy equation and $u$ is a solution to the second-order differential equation associated to the $k=3$ generalised Chazy equation. 
\end{theorem}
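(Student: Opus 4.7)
The plan is to parametrize both sides of the identity in terms of the functions $w_1, w_2, w_3$ arising from the Schwarzian equation (\ref{sde}), and then to verify a key potential identity directly.

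I would begin by writing $P = -2(w_1+w_2+w_3)$ with the admissible triple $(\alpha,\beta,\gamma) = (\tfrac{1}{3},\tfrac{1}{3},\tfrac{4}{3})$, so that $P$ solves the $k=\tfrac{3}{2}$ Chazy equation in the standard parametrization of Section 2. Using the Darboux--Halphen--type system (\ref{wde}) together with the explicit form of $\tau^2$ for this triple, a direct symmetric-function computation collapses $Q = P^2 - 6P'$ to the compact expression
\begin{equation*}
Q = 60\,(w_2-w_3)(w_3-w_1).
\end{equation*}
This is the form of $Q$ against which the candidate ansatz for $\rho$ should be tested.

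Next, I would identify a distinguished solution $v$ of $v_{ss}+\tfrac14 V_{3/2}v = 0$: using the Schwarzian relation $\{s,x\} = -\tfrac12 V_{3/2}(s')^2$ directly, one checks that $v(s) = 1/\sqrt{x'(s)}$ is a solution, equivalently $\tilde v(x) := v(s(x)) = \sqrt{s'(x)}$. For any solution $u(s)$ of the $k=3$ second-order ODE $u_{ss}+\tfrac14 V_3\,u = 0$, with $V_3$ the potential (\ref{pot}) for the $k=3$-admissible triple $(\tfrac{1}{3},\tfrac{1}{3},\tfrac{2}{3})$, I set $\rho(x) := u(s(x))/v(s(x))$. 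Differentiating this ratio twice in $x$ and substituting $\tilde u_{xx}$, $\tilde v_{xx}$ via the chain rule and the two second-order ODEs yields
\begin{equation*}
\rho_{xx} = \tfrac{1}{4}(V_{3/2}-V_3)(s')^2\,\rho + \Bigl[\frac{s''}{s'} - 2\frac{\tilde v_x}{\tilde v}\Bigr]\rho_x,
\end{equation*}
and the specific choice $\tilde v = \sqrt{s'}$ is precisely what makes the bracketed coefficient of $\rho_x$ vanish.

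The decisive remaining step is then the potential identity $\tfrac{1}{4}(V_{3/2}-V_3)(s')^2 = \tfrac{1}{45}Q$. Substituting $s' = 2(w_1-w_2)s$, $s = (w_1-w_3)/(w_2-w_3)$, and $s-1 = (w_1-w_2)/(w_2-w_3)$ into each singular term of $V$ and collecting, both sides reduce to $\tfrac{4}{3}(w_2-w_3)(w_3-w_1)$, closing the argument. The main obstacle is pinning down the correct $k=3$-admissible triple for $V_3$ --- the matching of rational coefficients in the $w_i$-variables essentially forces $(\tfrac{1}{3},\tfrac{1}{3},\tfrac{2}{3})$ --- and bookkeeping the various $w_i$-expressions through the Schwarzian; once this triple is selected, the verification is routine but must be carried out carefully.
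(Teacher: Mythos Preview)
Your argument is correct and follows the same underlying idea as the paper: parametrize $P$ by the Darboux--Halphen functions $w_1,w_2,w_3$, pick the distinguished solution $v$ with $\tilde v=\sqrt{s'}$ (equivalently $v_x=(w_1-w_2-w_3)v$, which is exactly the paper's auxiliary relation), and reduce the $\rho$--equation to a hypergeometric-type equation for $u$. The computations you sketch---$Q=60(w_2-w_3)(w_3-w_1)$ for the triple $(\tfrac13,\tfrac13,\tfrac43)$, and the potential difference $\tfrac14(V_{3/2}-V_3)(s')^2=\tfrac{4}{3}(w_2-w_3)(w_3-w_1)$ for the matching $k=3$ triple $(\tfrac13,\tfrac13,\tfrac23)$---check out.

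The main difference from the paper is one of scope. You isolate the single clean identity $\tfrac14(V_{3/2}-V_3)(s')^2=\tfrac{1}{45}Q$ and verify it for \emph{one} admissible triple; since the Schwarzian parametrization $(\tfrac13,\tfrac13,\tfrac43)$ already produces the general solution of the $k=\tfrac32$ Chazy equation, this suffices for the theorem as stated. The paper instead works in $x$--coordinates, derives the second--order equation for $u$ as an equation in the $w_i$ with undetermined constants $(\tilde\alpha,\tilde\beta,\tilde\gamma)$, and then checks \emph{all seven} admissible $k=\tfrac32$ triples (across the three weightings $-2w_1-2w_2-2w_3$, $-w_1-2w_2-3w_3$, $-4w_1-w_2-w_3$), exhibiting in each case the corresponding $k=3$ triple. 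Your route is shorter and makes the mechanism transparent as a difference of hypergeometric potentials; the paper's route is more laborious but yields the full dictionary between the $k=\tfrac32$ and $k=3$ Schwarz triangles, which it uses later in Section~4 for the general--$k$ discussion. You might add a sentence noting that the single parametrization already sweeps out the general solution $P$, so that no generality is lost by treating only one triple.
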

\begin{proof}
To prove the claim, we consider the second-order differential equation of the form $v_{ss}+\frac{1}{4}Vv=0$ associated to the generalised Chazy equation with parameter $k=\frac{3}{2}$, where $V$ is the function given by
\begin{align*}
V=\frac{1-\beta^2}{s^2}+\frac{1-\ga^2}{(s-1)^2}+\frac{\beta^2+\ga^2-\al^2-1}{s(s-1)}.
\end{align*}
We find that $v=v(s(x))$ as a function of $x$ satisfies
\begin{align*}
&v_{xx}-2(w_1-w_2-w_3)v_x-((\al^2-1)w_1^2+(\beta^2-1)w_2^2+(\ga^2-1)w_3^2)v\\
&+((\al^2+\beta^2-\ga^2-1)w_1w_2+(\al^2-\beta^2+\ga^2-1)w_1w_3-(\al^2-\beta^2-\ga^2+1)w_2w_3)v=0.
\end{align*}
We have used that
\begin{align*}
\frac{\der}{\der s}=\frac{(w_2-w_3)}{2(w_1-w_2)(w_1-w_3)}\frac{\der}{\der x}
\end{align*}
and the differential equations (\ref{wde}).
Furthermore, the Wronskian $W=v_1(v_2)_s-v_2(v_1)_s$ of the solutions to this differential equation satisfies $W_s=0$, so $W=c_0$ and we have
\begin{align*}
v_1^2=2c_0(w_1-w_2)s
\end{align*}
from the consideration that $s'=2(w_1-w_2)s=\frac{v_1^2}{W}$,
and furthermore we obtain from the differential equation that the Wronskian $W=\frac{v(s(x))^2}{2(w_1-w_2)s(x)}$ satisfies, that
\begin{align}\label{vx1}
v_{x}-v(w_1-w_2-w_3)=0.
\end{align}
This equation implies the differential equation for $v$ above, by using the fact that $w_i$ satisfy the differential equations (\ref{wde}).

Upon making the substitution $\rho=\frac{u(x)}{v(x)}$ into equation (\ref{sode1}), and using equation (\ref{vx1}), we obtain a differential equation for $u(x)$ remaining. This differential equation for $u(x)$ turns out to be of the form 
\begin{align*}
&u_{xx}-2(w_1-w_2-w_3)u_x-((\tilde \al^2-1)w_1^2+(\tilde \beta^2-1)w_2^2+(\tilde \ga^2-1)w_3^2)u\\
&+((\tilde \al^2+\tilde \beta^2-\tilde \ga^2-1)w_1w_2+(\tilde \al^2-\tilde \beta^2+\tilde \ga^2-1)w_1w_3-(\tilde \al^2-\tilde \beta^2-\tilde \ga^2+1)w_2w_3)u=0,
\end{align*}
which is the same differential equation for $v$ with different constants $\tilde \al$, $\tilde \beta$, $\tilde \ga$. This is the differential equation associated to the generalised Chazy equation with parameter $k=3$. We see this automatically when we compute the values with $Q=P^2-6P'$ where $P$ is the solution of the generalised Chazy equation with parameter $k=\frac{3}{2}$. Specialising to the case where $k=\frac{3}{2}$, we obtain the following:

For the solutions given by $P=-2w_1-2w_2-2w_3$, when $(\al,\beta,\ga)=(\frac{4}{3},\frac{4}{3},\frac{4}{3})$, we find $(\tilde\al,\tilde\beta,\tilde\ga)=(\frac{2}{3},\frac{2}{3},\frac{2}{3})$.
When $(\al,\beta,\ga)=(\frac{4}{3},\frac{1}{3},\frac{1}{3})$, we find $(\tilde\al,\tilde\beta,\tilde\ga)=(\frac{2}{3},\frac{1}{3},\frac{1}{3})$.

For the solutions given by $P=-w_1-2w_2-3w_3$, when $(\al,\beta,\ga)=(\frac{2}{3},\frac{1}{3},\frac{1}{2})$, we find $(\tilde\al,\tilde\beta,\tilde\ga)=(\frac{1}{3},\frac{1}{3},\frac{1}{2})$. When $(\al,\beta,\ga)=(\frac{2}{3},\frac{4}{3},\frac{1}{2})$, we find $(\tilde\al,\tilde\beta,\tilde\ga)=(\frac{1}{3},\frac{2}{3},\frac{1}{2})$. When $(\al,\beta,\ga)=(\frac{2}{3},\frac{1}{3},2)$, we find $(\tilde\al,\tilde\beta,\tilde\ga)=(\frac{1}{3},\frac{1}{3},1)$.

Finally for the solutions given by $P=-4w_1-w_2-w_3$, when $(\al,\beta,\ga)=(\frac{2}{3},\frac{2}{3},\frac{2}{3})$, we find $(\tilde\al,\tilde\beta,\tilde\ga)=(\frac{2}{3},\frac{1}{3},\frac{1}{3})$. When $(\al,\beta,\ga)=(\frac{8}{3},\frac{2}{3},\frac{2}{3})$, we find $(\tilde\al,\tilde\beta,\tilde\ga)=(\frac{4}{3},\frac{1}{3},\frac{1}{3})$. 

The values $(\tilde\al,\tilde\beta,\tilde\ga)$ are precisely the ones that show up in the solutions of the $k=3$ generalised Chazy equation. See \cite{r16b} for the list of $(\tilde\al,\tilde\beta,\tilde\ga)$ when $k=3$.

\end{proof}

The determination of solutions to equation (\ref{sode2}) is similar to that of Theorem \ref{A}. We prove the following

\begin{theorem}\label{B}
The solution to the differential equation
\begin{equation}\label{rf0a}
\nu''-\frac{1}{40}Q\nu=0,
\end{equation}
where $Q=P^2-6P'$ and $P$ is a solution of the $k=\frac{2}{3}$ generalised Chazy equation, is given by $\nu=\frac{u}{v}$, where $v$ is a solution to the second-order differential equation associated to the $k=\frac{2}{3}$ generalised Chazy equation and $u$ is a solution to the second-order differential equation associated to the $k=2$ generalised Chazy equation.
\end{theorem}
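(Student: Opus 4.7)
The plan is to reproduce the three-step argument of Theorem~\ref{A}, replacing the pair $(k,k')=(\tfrac{3}{2},3)$ with $(\tfrac{2}{3},2)$ and the coefficient $\tfrac{1}{45}$ by the coefficient $\tfrac{1}{40}$ appearing in (\ref{rf0a}). First I would take $v(s)$ to be a solution of the auxiliary equation $v_{ss}+\tfrac{1}{4}Vv=0$ whose potential $V$ carries the parameters $(\alpha,\beta,\gamma)$ appropriate to the $k=\tfrac{2}{3}$ generalised Chazy equation. Using the chain rule $\tfrac{\der}{\der s}=\tfrac{w_2-w_3}{2(w_1-w_2)(w_1-w_3)}\tfrac{\der}{\der x}$ together with (\ref{wde}) to clear the $w_i'$ terms, rewrite this as a linear second-order ODE for $v(x)$ of exactly the form appearing in the proof of Theorem~\ref{A}, and then invoke the Wronskian relation $v_1^2=2c_0(w_1-w_2)s$ to collapse it to the first-order relation $v_x-v(w_1-w_2-w_3)=0$.

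Next, substitute $\nu=u/v$ into (\ref{rf0a}). Using the first-order relation above to eliminate $v_x/v$ and $v_{xx}/v$ in favour of $w_1,w_2,w_3$ and their $x$-derivatives (the latter cleared once more via (\ref{wde})), the equation reduces to a linear second-order ODE for $u(x)$ whose zeroth-order coefficient is a polynomial in $w_1,w_2,w_3$ of total degree two. The heart of the argument is to verify that, with $Q=P^2-6P'$ and $P$ one of the three admissible combinations $-2(w_1+w_2+w_3)$, $-w_1-2w_2-3w_3$, or $-w_1-w_2-4w_3$ (up to permutation of the $w_i$), this quadratic form organises itself into the template
\begin{align*}
& -((\tilde\alpha^2-1)w_1^2+(\tilde\beta^2-1)w_2^2+(\tilde\gamma^2-1)w_3^2) \\
&\quad +(\tilde\alpha^2+\tilde\beta^2-\tilde\gamma^2-1)w_1w_2+(\tilde\alpha^2-\tilde\beta^2+\tilde\gamma^2-1)w_1w_3 \\
&\quad -(\tilde\alpha^2-\tilde\beta^2-\tilde\gamma^2+1)w_2w_3
\end{align*}
for some new constants $(\tilde\alpha,\tilde\beta,\tilde\gamma)$. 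When this template is met, the $u$-equation coincides with the second-order ODE attached, via the same change of variables, to the generalised Chazy equation with parameters $(\tilde\alpha,\tilde\beta,\tilde\gamma)$.

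The final step is to read off $(\tilde\alpha,\tilde\beta,\tilde\gamma)$ case by case from the allowed triples for $k=\tfrac{2}{3}$, namely $(\tfrac{1}{3},\tfrac{1}{3},3)$ and $(3,3,3)$ for $P=-2(w_1+w_2+w_3)$; the three triples with first entry $\tfrac{3}{2}$ and a $\tfrac{1}{2}$-factor coming from $P=-w_1-2w_2-3w_3$; and $(\tfrac{3}{2},\tfrac{3}{2},6)$ or $(\tfrac{3}{2},\tfrac{3}{2},\tfrac{2}{3})$ for $P=-w_1-w_2-4w_3$. In each case the six linear conditions on $\tilde\alpha^2,\tilde\beta^2,\tilde\gamma^2$ must be simultaneously consistent, and the resulting triple must appear in the tabulated list of admissible parameters for the $k=2$ generalised Chazy equation recorded in \cite{r16b}. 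The main obstacle is purely the algebraic bookkeeping of the second step: the coefficient $\tfrac{1}{40}$ must be exactly right for the six self- and cross-term coefficients to fit a single consistent triple $(\tilde\alpha,\tilde\beta,\tilde\gamma)^2$. That this succeeds is morally guaranteed by the Legendre duality between the $k=\tfrac{3}{2}$ and $k=\tfrac{2}{3}$ 6th-order ODEs discussed earlier in the paper, so I expect the verification to reduce to a bounded symbolic computation of the same kind as that carried out in the proof of Theorem~\ref{A}.
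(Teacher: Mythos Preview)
Your proposal is correct and follows essentially the same route as the paper: convert $v_{ss}+\tfrac14 Vv=0$ to an $x$-ODE via the chain rule and (\ref{wde}), reduce to the first-order relation $v_x=(w_1-w_2-w_3)v$, substitute $\nu=u/v$ into (\ref{rf0a}), and then match the resulting quadratic form in the $w_i$ against the template to read off $(\tilde\alpha,\tilde\beta,\tilde\gamma)$ case by case. The paper carries out exactly this computation and records the resulting triples explicitly (e.g.\ $(3,3,3)\mapsto(1,1,1)$, $(\tfrac32,\tfrac13,\tfrac12)\mapsto(\tfrac12,\tfrac13,\tfrac12)$, etc.), confirming they are the $k=2$ parameters; your outline anticipates this verification but stops short of tabulating the outcomes.
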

\begin{proof}
The proof of the claim is similar to the proof of the previous theorem. From the differential equation of the form $v_{ss}+\frac{1}{4}Vv=0$ associated to the $k=\frac{2}{3}$ generalised Chazy equation, where $V$ is the function given by
\begin{align*}
V=\frac{1-\beta^2}{s^2}+\frac{1-\ga^2}{(s-1)^2}+\frac{\beta^2+\ga^2-\al^2-1}{s(s-1)},
\end{align*}
we find that $v=v(s(x))$ as a function of $x$ satisfies
\begin{align*}
&v_{xx}-2(w_1-w_2-w_3)v_x-((\al^2-1)w_1^2+(\beta^2-1)w_2^2+(\ga^2-1)w_3^2)v\\
&+((\al^2+\beta^2-\ga^2-1)w_1w_2+(\al^2-\beta^2+\ga^2-1)w_1w_3-(\al^2-\beta^2-\ga^2+1)w_2w_3)v=0.
\end{align*}
Like in the proof of Theorem \ref{A}, it can also be deduced that (\ref{vx1}) holds for $v$, i.e.
\begin{align}\label{vx2}
v_{x}-v(w_1-w_2-w_3)=0, 
\end{align}
which again implies the differential equation for $v$ above, by using the fact that $w_i$ satisfy the differential equations (\ref{wde}).

Upon making the substitution $\nu=\frac{u(x)}{v(x)}$ into equation (\ref{rf0a}), and using equation (\ref{vx2}), we obtain a differential equation for $u(x)$ remaining. The differential equation for $u(x)$ is again
\begin{align}\label{ustar}
&u_{xx}-2(w_1-w_2-w_3)u_x-((\tilde \al^2-1)w_1^2+(\tilde \beta^2-1)w_2^2+(\tilde \ga^2-1)w_3^2)u\\
&+((\tilde \al^2+\tilde \beta^2-\tilde \ga^2-1)w_1w_2+(\tilde \al^2-\tilde \beta^2+\tilde \ga^2-1)w_1w_3-(\tilde \al^2-\tilde \beta^2-\tilde \ga^2+1)w_2w_3)u=0,\nonumber
\end{align}
which is the same differential equation for $v$ but with different constants $\tilde \al$, $\tilde \beta$, $\tilde \ga$. The equation (\ref{ustar}) corresponds to the second-order differential equation associated to the $k=2$ generalised Chazy equation. To see this, we shall compute these constants when $Q=P^2-6P'$ and $P$ is the solution of the generalised Chazy equation with parameter $k=\frac{2}{3}$. Specialising to the case where $k=\frac{2}{3}$, we obtain the following:

For the solutions given by $P=-2w_1-2w_2-2w_3$, when $(\al,\beta,\ga)=(3,3,3)$, we find $(\tilde\al,\tilde\beta,\tilde\ga)=(1,1,1)$.
When $(\al,\beta,\ga)=(3,\frac{1}{3},\frac{1}{3})$, we find $(\tilde\al,\tilde\beta,\tilde\ga)=(1,\frac{1}{3},\frac{1}{3})$.

For the solutions given by $P=-w_1-2w_2-3w_3$, when $(\al,\beta,\ga)=(\frac{3}{2},\frac{1}{3},\frac{1}{2})$, we find $(\tilde\al,\tilde\beta,\tilde\ga)=(\frac{1}{2},\frac{1}{3},\frac{1}{2})$. When $(\al,\beta,\ga)=(\frac{3}{2},3,\frac{1}{2})$, we find $(\tilde\al,\tilde\beta,\tilde\ga)=(\frac{1}{2},1,\frac{1}{2})$. When $(\al,\beta,\ga)=(\frac{3}{2},\frac{1}{3},\frac{9}{2})$, we find $(\tilde\al,\tilde\beta,\tilde\ga)=(\frac{1}{2},\frac{1}{3},\frac{3}{2})$.

Finally for the solutions given by $P=-4w_1-w_2-w_3$, when $(\al,\beta,\ga)=(\frac{2}{3},\frac{3}{2},\frac{3}{2})$, we find $(\tilde\al,\tilde\beta,\tilde\ga)=(\frac{2}{3},\frac{1}{2},\frac{1}{2})$. When $(\al,\beta,\ga)=(6,\frac{3}{2},\frac{3}{2})$, we find $(\tilde\al,\tilde\beta,\tilde\ga)=(2,\frac{1}{2},\frac{1}{2})$. 

The values $(\tilde\al,\tilde\beta,\tilde\ga)$ are precisely the ones that show up in the solutions of the $k=2$ generalised Chazy equation. See also \cite{r16b} for the list of $(\tilde\al,\tilde\beta,\tilde\ga)$ when $k=2$.

\end{proof}

\section{Solution to the equation for Ricci-flatness for general Chazy parameter}
More generally, when $P$ is a solution to the generalised Chazy equation with parameter $k$, the metric $g$ is no longer conformally flat but we can still find the conformal scale for which the Ricci tensor vanishes.

In the case of (\ref{sode1}) with solutions given by $\nu=\frac{u}{v}$ where $v$ is the second-order differential equation associated to the generalised Chazy equation with parameter $k$, we find that $u$ is a solution to the second-order differential equation associated to the generalised Chazy equation with parameter $\tilde k$ with
 \begin{equation}\label{g1}
\frac{45}{\tilde k^2}-\frac{9}{k^2}=1. 
\end{equation}
The values $(\al, \beta, \ga)$ appearing in $V$ in the differential equation $v_{ss}+\frac{1}{4}Vv=0$ are related to the values $(\tilde \al, \tilde \beta, \tilde \ga)$ appearing in $V$ in the differential equation $u_{ss}+\frac{1}{4}Vu=0$ by the following. For the solutions given by $P=-2w_1-2w_2-2w_3$, when $(\al,\beta,\ga)=(\frac{2}{k},\frac{2}{k},\frac{2}{k})$, we find $(\tilde\al,\tilde\beta,\tilde\ga)$ with 
\begin{align*}
\frac{45}{4}\tilde\al^2-\left(\frac{3}{k}\right)^2=1,\hspace{12pt}
\frac{45}{4}\tilde\beta^2-\left(\frac{3}{k}\right)^2=1,\hspace{12pt}
\frac{45}{4}\tilde\ga^2-\left(\frac{3}{k}\right)^2=1.
\end{align*}

When $(\al,\beta,\ga)=(\frac{2}{k},\frac{1}{3},\frac{1}{3})$, we find $(\tilde\al,\tilde\beta,\tilde\ga)$ with
\begin{align*}
\frac{45}{4}\tilde\al^2-\left(\frac{3}{k}\right)^2=1
\end{align*}
and $\tilde \beta=\frac{1}{3}$, $\tilde \ga=\frac{1}{3}$. Here and subsequently, we shall consider the positive square root that gives positive $\tilde \al$, $\tilde \beta$ and $\tilde \ga$. 

For the solutions given by $P=-w_1-2w_2-3w_3$, when $(\al,\beta,\ga)=(\frac{1}{k},\frac{1}{3},\frac{1}{2})$, we find $(\tilde\al,\tilde\beta,\tilde\ga)$ with
\begin{align*}
45\tilde\al^2-\left(\frac{3}{k}\right)^2=1,\hspace{12pt}
\tilde\beta=\frac{1}{3},\hspace{12pt}
\tilde\ga=\frac{1}{2}.
\end{align*}

When $(\al,\beta,\ga)=(\frac{1}{k},\frac{2}{k},\frac{1}{2})$, we find $(\tilde\al,\tilde\beta,\tilde\ga)$ with
\begin{align*}
45\tilde\al^2-\left(\frac{3}{k}\right)^2=1,\hspace{12pt}
\frac{45}{4}\tilde\beta^2-\left(\frac{3}{k}\right)^2=1,\hspace{12pt}
\tilde\ga=\frac{1}{2}.
\end{align*}

When $(\al,\beta,\ga)=(\frac{1}{k},\frac{1}{3},\frac{3}{k})$, we find $(\tilde\al,\tilde\beta,\tilde\ga)$ with
\begin{align*}
45\tilde\al^2-\left(\frac{3}{k}\right)^2=1,\hspace{12pt}
\tilde \beta=\frac{1}{3},\hspace{12pt}
5\tilde\ga-\left(\frac{3}{k}\right)^2=1.
\end{align*}

Finally for the solutions given by $P=-4w_1-w_2-w_3$, when $(\al,\beta,\ga)=(\frac{4}{k},\frac{1}{k},\frac{1}{k})$, we find $(\tilde\al,\tilde\beta,\tilde\ga)$ with
\begin{align*}
\frac{45}{16}\tilde\al^2-\left(\frac{3}{k}\right)^2=1,\hspace{12pt}
45\tilde\beta^2-\left(\frac{3}{k}\right)^2=1,\hspace{12pt}
45\tilde\ga^2-\left(\frac{3}{k}\right)^2=1.
\end{align*}

When $(\al,\beta,\ga)=(\frac{2}{3},\frac{1}{k},\frac{1}{k})$, we find $(\tilde\al,\tilde\beta,\tilde\ga)$ with 
\begin{align*}
\tilde\al=\frac{2}{3},\hspace{12pt}
45\tilde\beta^2-\left(\frac{3}{k}\right)^2=1,\hspace{12pt}
45\tilde\ga^2-\left(\frac{3}{k}\right)^2=1.
\end{align*}
In all cases the appropriate substitution of $\tilde \al$, $\tilde \beta$ and $\tilde \ga$ in terms of the Chazy parameter $\tilde k$ gives equation (\ref{g1}),
so it can be seen that the equation for $u$ is the second-order differential equation associated to the generalised Chazy equation with parameter $\tilde k$, related to $k$ by (\ref{g1}). The further substitution $k=\frac{3}{m}$ and $\tilde k=\frac{3}{\tilde m}$ into (\ref{g1}) gives
\[
5\tilde m^2-m^2=1,
\]
which has integer solutions when considered as a negative Pell equation. 
For integer solutions $m$ and $\tilde m$ we obtain
\begin{align*}
m&=\pm(\frac{1}{2}(2+\sqrt{5})^{2n+1}+\frac{1}{2}(2-\sqrt{5})^{2n+1}),\\
\tilde m&=\pm(\frac{\sqrt{5}}{10}(2+\sqrt{5})^{2n+1}-\frac{\sqrt{5}}{10}(2-\sqrt{5})^{2n+1}).
\end{align*}
They take on values $(m,\tilde m)=(2,1)$, $(38,17)$, $(682,305)$, $(12238,5473)$ and so on for $n\in \mathbb{N} \cup \{0\}$. They also give the corresponding pairs of Chazy parameters $(k, \tilde k)=(\frac{3}{2},3)$, $(\frac{3}{38},\frac{3}{17})$ and so on, with the fundamental solution ($n=0$) agreeing with the result of Theorem \ref{A} in the conformally flat case.

 In the case of (\ref{sode2}) with solutions given by $\nu=\frac{u}{v}$ where $v$ is the second-order differential equation associated to the generalised Chazy equation with parameter $k$, we find that $u$ is a solution to the second-order differential equation associated to the generalised Chazy equation with parameter $\tilde k$ with
 \begin{equation}\label{g2}
\frac{40}{\tilde k^2}-\frac{4}{k^2}=1. 
\end{equation}
In this case we obtain the relationship between the values $(\al, \beta, \ga)$ and $(\tilde \al, \tilde \beta, \tilde \ga)$ as follows. For $P=-2w_1-2w_2-2w_3$, when $(\al,\beta,\ga)=(\frac{2}{k},\frac{2}{k},\frac{2}{k})$, we find $(\tilde\al,\tilde\beta,\tilde\ga)$ with 
\begin{align*}
10\tilde\al^2-\left(\frac{2}{k}\right)^2=1,\hspace{12pt}
10\tilde\beta^2-\left(\frac{2}{k}\right)^2=1,\hspace{12pt}
10\tilde\ga^2-\left(\frac{2}{k}\right)^2=1.
\end{align*}
Considering integer solutions $\al$ and $\tilde \al$ to the negative Pell equation $10\tilde \al^2-\al^2=1$ (and also $\beta$, $\tilde \beta $ and $\ga$, $\tilde \ga$ respectively), we find
\begin{align*}
\al&=\pm(\frac{1}{2}(3+\sqrt{10})^{2n+1}+\frac{1}{2}(3-\sqrt{10})^{2n+1}),\\
\tilde \al&=\pm(\frac{\sqrt{10}}{20}(3+\sqrt{10})^{2n+1}-\frac{\sqrt{10}}{20}(3-\sqrt{10})^{2n+1}),
\end{align*}
where $n \in \mathbb{Z}$. Positive integer solutions are given by $(\al, \tilde \al)=(3,1)$, $(117,37)$, $(4443,1405)$, $(168717, 53353)$ and so on for $n \in \mathbb{N} \cup \{0\}$. They give the relationship between the pairs of Chazy parameters $k=\frac{2}{\al}$ and $\tilde k=\frac{2}{\tilde \al}$, with $(k, \tilde k)=(\frac{2}{3},2)$, $(\frac{2}{117},\frac{2}{37})$ and so on for $n \in \mathbb{N}\cup \{0\}$. For these parameters the associated hypergeometric functions are algebraic. Again the fundamental solution ($n=0$) agrees with the result of Theorem \ref{B} in the conformally flat case.

The determination of the other values of $(\al, \beta, \ga)$ and $(\tilde \al, \tilde \beta, \tilde \ga)$ are as follows. 
For the same $P$, when $(\al,\beta,\ga)=(\frac{2}{k},\frac{1}{3},\frac{1}{3})$, we find $(\tilde\al,\tilde\beta,\tilde\ga)$ with
\begin{align*}
10\tilde\al^2-\left(\frac{2}{k}\right)^2=1
\end{align*}
and $\tilde \beta=\frac{1}{3}$, $\tilde \ga=\frac{1}{3}$.

For the solutions given by $P=-w_1-2w_2-3w_3$, when $(\al,\beta,\ga)=(\frac{1}{k},\frac{1}{3},\frac{1}{2})$, we find $(\tilde\al,\tilde\beta,\tilde\ga)$ with
\begin{align*}
40\tilde\al^2-\left(\frac{2}{k}\right)^2=1,\hspace{12pt} \tilde\beta=\frac{1}{3},\hspace{12pt} \tilde\ga=\frac{1}{2}.
\end{align*}

When $(\al,\beta,\ga)=(\frac{1}{k},\frac{2}{k},\frac{1}{2})$, we find $(\tilde\al,\tilde\beta,\tilde\ga)$ with
\begin{align*}
40\tilde\al^2-\left(\frac{2}{k}\right)^2=1,\hspace{12pt} 10\tilde\beta^2-\left(\frac{2}{k}\right)^2=1,\hspace{12pt} \tilde\ga=\frac{1}{2}.
\end{align*}

When $(\al,\beta,\ga)=(\frac{1}{k},\frac{1}{3},\frac{3}{k})$, we find $(\tilde\al,\tilde\beta,\tilde\ga)$ with
\begin{align*}
40\tilde\al^2-\left(\frac{2}{k}\right)^2=1,\hspace{12pt} \tilde \beta=\frac{1}{3},\hspace{12pt} \frac{40}{9}\tilde\ga-\left(\frac{2}{k}\right)^2=1.
\end{align*}

Finally for the solutions given by $P=-4w_1-w_2-w_3$, when $(\al,\beta,\ga)=(\frac{4}{k},\frac{1}{k},\frac{1}{k})$, we find $(\tilde\al,\tilde\beta,\tilde\ga)$ with
\begin{align*}
\frac{5}{2}\tilde\al^2-\left(\frac{2}{k}\right)^2=1,\hspace{12pt} 40\tilde\beta^2-\left(\frac{2}{k}\right)^2=1,\hspace{12pt} 40\tilde\ga^2-\left(\frac{2}{k}\right)^2=1.
\end{align*}

When $(\al,\beta,\ga)=(\frac{2}{3},\frac{1}{k},\frac{1}{k})$, we find $(\tilde\al,\tilde\beta,\tilde\ga)$ with 
\begin{align*}
\tilde\al=\frac{2}{3},\hspace{12pt} 40\tilde\beta^2-\left(\frac{2}{k}\right)^2=1,\hspace{12pt} 40\tilde\ga^2-\left(\frac{2}{k}\right)^2=1.
\end{align*}
In all cases the appropriate substitution of $\tilde \al$, $\tilde \beta$ and $\tilde \ga$ in terms of the Chazy parameter $\tilde k$ gives equation (\ref{g2}), and therefore the equation for $u$ is the second-order differential equation associated to the generalised Chazy equation with parameter $\tilde k$, related to $k$ by (\ref{g2}). Altogether, with the exception of the parameters $k=\frac{3}{2}$ and $k=\frac{2}{3}$ as mentioned above, they give Ricci-flat but non-conformally flat examples of Nurowski's metric.

\qr


\begin{thebibliography}{XX}

\bibitem{acht} M.\ J.\ Ablowitz, S.\ Chakravarty and R.\ Halburd, \textit{The generalized Chazy equation and Schwarzian triangle functions}, Asian J. Math. {\bf 2}, (1998), 1--6.

\bibitem{ach} M.\ J.\ Ablowitz, S.\ Chakravarty and R.\ Halburd, \textit{The generalized Chazy equation from the self-duality equations}, Stud. Appl. Math. {\bf 103}, (1999), 75--88.

\bibitem{annur} D.\ An and P.\ Nurowski, \textit{Symmetric (2,3,5) distributions, an interesting ODE of 7th order and Pleba{\' n}ski metric}, Journ. Geom. Phys. {\bf  126} , 93--100 (2018). 

\bibitem{bc17} O.\ Bihun, S.\ Chakravarty, \textit{The Chazy XII Equation and Schwarz Triangle Functions}, SIGMA, {\bf 13} (2017), 095, 24 pp. 

\bibitem{cartan1910} E.\ Cartan, \textit{Les syst\`emes de Pfaff, \`a cinq variables et les \'equations aux d\'eriv\'ees partielles
du second ordre}, Ann. Sci. \'Ecole Norm. Sup. (3) {\bf 27}
(1910), 109--192.

\bibitem{chazy1} J.\ Chazy, \textit{Sur les \'equations diff\'erentielles dont l'int\'egrale g\'en\'erale est uniforme et admet des singularit\'es essentielles mobiles}, C.R. Acad. Sc. Paris {\bf 149} (1909), 563--565.

\bibitem{chazy2} J.\ Chazy, \textit{Sur les \'equations diff\'erentielles du troisi\`eme ordre et d'ordre sup\'erieur dont l'int\'egrale a ses points critiques fixes}, Acta Math. {\bf 34} (1911), 317--385,

\bibitem{co96} P.\ A.\ Clarkson and P.\ J.\ Olver, \textit{Symmetry and the Chazy Equation}, Journal of Differential Equations, {\bf 124}, 1, (1996), 225--246.

\bibitem{new} T.\ Leistner, P.\ Nurowski and K.\ Sagerschnig, \textit{New relations between $G_2$-geometries in dimensions 5 and 7}, International Journal of Mathematics, {\bf 28}, 13, 1750094 (2017). 

\bibitem{conf} P.\ Nurowski, \textit{Differential equations and conformal structures},
Journ. Geom. Phys.
{\bf 55} (2005), 19--49.

\bibitem{r16} M.\ Randall, \textit{Flat (2,3,5)-distributions and Chazy's equations}, SIGMA {\bf 12}, 029, 2016.

\bibitem{r16b} M.\ Randall, \textit{Schwarz triangle functions and duality for certain parameters of the generalised Chazy equation}, arxiv:1607.04961v2.

\bibitem{Strazzullo} F.\ Strazzullo, \textit{Symmetry Analysis of General Rank-3 Pfaffian Systems in Five Variables},
Ph.D. Thesis, Utah State University (2009).

\bibitem{tw13} T.\ Willse,
\textit{ Highly symmetric 2-plane fields on 5-manifolds and Heisenberg 5-group holonomy}, Differential Geometry and its Applications, {\bf 33} Supplement (2014), 81--111.

\bibitem{tw14} T.\ Willse, \textit{Cartan’s incomplete classification and an explicit ambient metric of holonomy $G_2^*$}, European Journal of Mathematics, {\bf 4}, 2 (2018), 622--638.

\end{thebibliography}
\end{document}